\def\y{{\bf y}}
\def\x{{\bf x}}
\def\x{{\mathbf x}}
\def\x{{\bf x}}
\def\y{{\bf y}}
\def\h{{\bf h}}
\def\cH{{\cal H}}
\def\be{\begin{equation}}
\def\ee{\end{equation}}
\def\ba{\left[\begin{array}}
\def\ea{\end{array}\right]}
\def\x{{\bf x}}
\def\y{{\bf y}}
\def\1{{\bf 1}}
\def\g{{\bf g}}
\def\0{{\bf 0}}
\newtheorem{theorem}{Theorem}
\newtheorem{lemma}{Lemma}
\begin{document}

\begin{singlespace}

\title {Lifting/lowering Hopfield models ground state energies  
}
\author{
\textsc{Mihailo Stojnic}
\\
\\
{School of Industrial Engineering}\\
{Purdue University, West Lafayette, IN 47907} \\
{e-mail: {\tt mstojnic@purdue.edu}} }
\date{}
\maketitle

\centerline{{\bf Abstract}} \vspace*{0.1in}

In our recent work \cite{StojnicHopBnds10} we looked at a class of random optimization problems that arise in the forms typically known as Hopfield models. We viewed two scenarios which we termed as the positive Hopfield form and the negative Hopfield form. For both of these scenarios we defined the binary optimization problems whose optimal values essentially emulate what would typically be known as the ground state energy of these models. We then presented a simple mechanisms that can be used to create a set of theoretical rigorous bounds for these energies. In this paper we create a way more powerful set of mechanisms that can substantially improve the simple bounds given in \cite{StojnicHopBnds10}. In fact, the mechanisms we create in this paper are the first set of results that show that convexity type of bounds can be substantially improved in this type of combinatorial problems.

\vspace*{0.25in} \noindent {\bf Index Terms: Hopfield models; ground-state energy}.

\end{singlespace}

\section{Introduction}
\label{sec:back}

We start by recalling on what the Hopfield models are. These models are well known in mathematical physics. However, we will be purely interested in their mathematical properties and the definitions that we will give below in this short introduction are almost completely mathematical without much of physics type of insight. Given a relatively simple and above all well known structure of the Hopfield models we do believe that it will be fairly easy for readers from both, mathematics and physics, communities to connect to the parts they find important to them. Before proceeding with the detailed introduction of the models we will also mention that fairly often we will define mathematical objects of interest but will occasionally refer to them using their names typically known in physics.

The model that we will study was popularized in \cite{Hop82} (or if viewed in a different context one could say in \cite{PasFig78,Hebb49}). It essentially looks at what is called Hamiltonian of the following type
\begin{equation}
\cH(H,\x)=\sum_{i\neq j}^{n}  A_{ij}\x_i\x_j,\label{eq:ham}
\end{equation}
where
\begin{equation}
A_{ij}(H)=\sum_{l=1}^{m}  H_{il}H_{lj},\label{eq:hamAij}
\end{equation}
is the so-called quenched interaction and $H$ is an $m\times n$ matrix that can be also viewed as the matrix of the so-called stored patterns (we will typically consider scenario where $m$ and $n$ are large and $\frac{m}{n}=\alpha$ where $\alpha$ is a constant independent of $n$; however, many of our results will hold even for fixed $m$ and $n$). Each pattern is essentially a row of matrix $H$ while vector $\x$ is a vector from $R^n$ that emulates spins (or in a different context one may say neuron states). Typically, one assumes that the patterns are binary and that each neuron can have two states (spins) and hence the elements of matrix $H$ as well as elements of vector $\x$ are typically assumed to be from set $\{-\frac{1}{\sqrt{n}},\frac{1}{\sqrt{n}}\}$. In physics literature one usually follows convention and introduces a minus sign in front of the Hamiltonian given in (\ref{eq:ham}). Since our main concern is not really the physical interpretation of the given Hamiltonian but rather mathematical properties of such forms we will avoid the minus sign and keep the form as in (\ref{eq:ham}).

To characterize the behavior of physical interpretations that can be described through the above Hamiltonian one then looks at the partition function
\begin{equation}
Z(\beta,H)=\sum_{\x\in\{-\frac{1}{\sqrt{n}},\frac{1}{\sqrt{n}}\}^n}e^{\beta\cH(H,\x)},\label{eq:partfun}
\end{equation}
where $\beta>0$ is what is typically called the inverse temperature. Depending of what is the interest of studying one can then also look at a more appropriate scaled $\log$ version of $Z(\beta,H)$ (typically called the free energy)
\begin{equation}
f_p(n,\beta,H)=\frac{\log{(Z(\beta,H)})}{\beta n}=\frac{\log{(\sum_{\x\in\{-\frac{1}{\sqrt{n}},\frac{1}{\sqrt{n}}\}^n}e^{\beta\cH(H,\x)})}}{\beta n}.\label{eq:logpartfun}
\end{equation}
Studying behavior of the partition function or the free energy of the Hopfield model of course has a long history. Since we will not focus on the entire function in this paper we just briefly mention that a long line of results can be found in e.g. excellent references \cite{PasShchTir94,ShchTir93,BarGenGueTan10,BarGenGueTan12,Tal98}. In this paper we will focus on studying optimization/algorithmic aspects of $\frac{\log{(Z(\beta,H)})}{\beta n}$. More specifically, we will look at a particular regime $\beta,n\rightarrow\infty$ (which is typically called a zero-temperature thermodynamic limit regime or as we will occasionally call it the ground state regime). In such a regime one has
\begin{equation}
\hspace{-.3in}\lim_{\beta,n\rightarrow\infty}f_p(n,\beta,H)=
\lim_{\beta,n\rightarrow\infty}\frac{\log{(Z(\beta,H)})}{\beta n}=\lim_{n\rightarrow\infty}\frac{\max_{\x\in\{-\frac{1}{\sqrt{n}},\frac{1}{\sqrt{n}}\}^n}\cH(H,\x)}{n}
=\lim_{n\rightarrow\infty}\frac{\max_{\x\in\{-\frac{1}{\sqrt{n}},\frac{1}{\sqrt{n}}\}^n}\|H\x\|_2^2}{n},\label{eq:limlogpartfun}
\end{equation}
which essentially renders the following form (often called the ground state energy)
\begin{equation}
\lim_{\beta,n\rightarrow\infty}f_p(n,\beta,H)=\lim_{n\rightarrow\infty}\frac{\max_{\x\in\{-\frac{1}{\sqrt{n}},\frac{1}{\sqrt{n}}\}^n}\|H\x\|_2^2}{n},\label{eq:posham}
\end{equation}
which will be one of the main subjects that we study in this paper. We will refer to the optimization part of (\ref{eq:posham}) as the positive Hopfield form.

In addition to this form we will also study its a negative counterpart. Namely, instead of the partition function given in (\ref{eq:partfun}) one can look at a corresponding partition function of a negative Hamiltonian from (\ref{eq:ham}) (alternatively, one can say that instead of looking at the partition function defined for positive temperatures/inverse temperatures one can also look at the corresponding partition function defined for negative temperatures/inverse temperatures). In that case (\ref{eq:partfun}) becomes
\begin{equation}
Z(\beta,H)=\sum_{\x\in\{-\frac{1}{\sqrt{n}},\frac{1}{\sqrt{n}}\}^n}e^{-\beta\cH(H,\x)},\label{eq:partfunneg}
\end{equation}
and if one then looks at its an analogue to (\ref{eq:limlogpartfun}) one then obtains
\begin{equation}
\hspace{-.3in}\lim_{\beta,n\rightarrow\infty}f_n(n,\beta,H)=\lim_{\beta,n\rightarrow\infty}\frac{\log{(Z(\beta,H)})}{\beta n}=\lim_{n\rightarrow\infty}\frac{\max_{\x\in\{-\frac{1}{\sqrt{n}},\frac{1}{\sqrt{n}}\}^n}-\cH(H,\x)}{n}
=\lim_{n\rightarrow\infty}\frac{\min_{\x\in\{-\frac{1}{\sqrt{n}},\frac{1}{\sqrt{n}}\}^n}\|H\x\|_2^2}{n}.\label{eq:limlogpartfunneg}
\end{equation}
This then ultimately renders the following form which is in a way a negative counterpart to (\ref{eq:posham})
\begin{equation}
\lim_{\beta,n\rightarrow\infty}f_n(n,\beta,H)=\lim_{n\rightarrow\infty}\frac{\min_{\x\in\{-\frac{1}{\sqrt{n}},\frac{1}{\sqrt{n}}\}^n}\|H\x\|_2^2}{n}.\label{eq:negham}
\end{equation}
We will then correspondingly refer to the optimization part of (\ref{eq:negham}) as the negative Hopfield form.

In the following sections we will present a collection of results that relate to behavior of the forms given in (\ref{eq:posham}) and (\ref{eq:negham}) when they are viewed in a statistical scenario. The results that we will present will essentially correspond to what is called the ground state energies of these models. As it will turn out, in the statistical scenario that we will consider, (\ref{eq:posham}) and (\ref{eq:negham}) will be almost completely characterized by their corresponding average values
\begin{equation}
\lim_{\beta,n\rightarrow\infty}Ef_p(n,\beta,H)=\lim_{n\rightarrow\infty}\frac{E\max_{\x\in\{-\frac{1}{\sqrt{n}},\frac{1}{\sqrt{n}}\}^n}\|H\x\|_2^2}{n}\label{eq:poshamavg}
\end{equation}
and
\begin{equation}
\lim_{\beta,n\rightarrow\infty}Ef_n(n,\beta,H)=\lim_{n\rightarrow\infty}\frac{E\min_{\x\in\{-\frac{1}{\sqrt{n}},\frac{1}{\sqrt{n}}\}^n}\|H\x\|_2^2}{n}.\label{eq:neghamavg}
\end{equation}

Before proceeding further with our presentation we will be a little bit more specific about the organization of the paper. In Section \ref{sec:poshop} we will present a powerful mechanism that can be used create bounds on the ground state energies of the positive Hopfield form in a statistical scenario. We will then in Section \ref{sec:neghop} present the corresponding results for the negative Hopfield form. In Section \ref{sec:conc} we will present a brief discussion and several concluding remarks.

\section{Positive Hopfield form}
\label{sec:poshop}

In this section we will look at the following optimization problem (which clearly is the key component in estimating the ground state energy in the thermodynamic limit)
\begin{equation}
\max_{\x\in\{-\frac{1}{\sqrt{n}},\frac{1}{\sqrt{n}}\}^n}\|H\x\|_2^2.\label{eq:posham1}
\end{equation}
For a deterministic (given fixed) $H$ this problem is of course known to be NP-hard (it essentially falls under the class of binary quadratic optimization problems). Instead of looking at the problem in (\ref{eq:posham1}) in a deterministic way i.e. in a way that assumes that matrix $H$ is deterministic, we will look at it in a statistical scenario (this is of course a typical scenario in statistical physics). Within a framework of statistical physics and neural networks the problem in (\ref{eq:posham1}) is studied assuming that the stored patterns (essentially rows of matrix $H$) are comprised of Bernoulli $\{-1,1\}$ i.i.d. random variables see, e.g. \cite{Tal98,PasShchTir94,ShchTir93}. While our results will turn out to hold in such a scenario as well we will present them in a different scenario: namely, we will assume that the elements of matrix $H$ are i.i.d. standard normals. We will then call the form (\ref{eq:posham1}) with Gaussian $H$, the Gaussian positive Hopfield form. On the other hand, we will call the form (\ref{eq:posham1}) with Bernoulli $H$, the Bernoulli positive Hopfield form. In the remainder of this section we will look at possible ways to estimate the optimal value of the optimization problem in (\ref{eq:posham1}). Below we will introduce a strategy that can be used to obtain an upper bound on the optimal value.

\subsection{Upper-bounding ground state energy of the positive Hopfield form}
\label{sec:poshopub}

In this section we will look at problem from (\ref{eq:posham1}). In fact, to be a bit more precise, in order to make the exposition as simple as possible, we will look at its a slight variant given below
\begin{equation}
\xi_p=\max_{\x\in\{-\frac{1}{\sqrt{n}},\frac{1}{\sqrt{n}}\}^n}\|H\x\|_2.\label{eq:sqrtposham1}
\end{equation}
As mentioned above, we will assume that the elements of $H$ are i.i.d. standard normal random variables. Before proceeding further with the analysis of (\ref{eq:sqrtposham1}) we will recall on several well known results that relate to Gaussian random variables and the processes they create.

First we recall the following results from \cite{Gordon85} that relates to statistical properties of certain Gaussian processes.
\begin{theorem}(\cite{Gordon85})
\label{thm:Gordonpos1} Let $X_{i}$ and $Y_{i}$, $1\leq i\leq n$, be two centered Gaussian processes which satisfy the following inequalities for all choices of indices
\begin{enumerate}
\item $E(X_{i}^2)=E(Y_{i}^2)$
\item $E(X_{i}X_{l})\leq E(Y_{i}Y_{l}), i\neq l$.
\end{enumerate}
Let $\psi()$ be an increasing function on the real axis. Then
\begin{equation*}
E(\min_{i}\psi(X_{i}))\leq E(\min_i \psi(Y_{i})) \Leftrightarrow E(\max_{i}\psi(X_{i}))\geq E(\max_i\psi(Y_{i})).
\end{equation*}
\end{theorem}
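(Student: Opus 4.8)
The plan is to prove the ``$\max$'' form, namely $E(\max_i\psi(X_i))\ge E(\max_i\psi(Y_i))$ for every increasing $\psi$; the ``$\min$'' form then follows, since the processes $-X_i,-Y_i$ again satisfy hypotheses 1 and 2, and applying the $\max$-inequality to them with the increasing function $t\mapsto -\psi(-t)$ gives, using $\max_i(-\psi(X_i))=-\min_i\psi(X_i)$, exactly $E(\min_i\psi(X_i))\le E(\min_i\psi(Y_i))$. Thus the two inequalities are equivalent reformulations (this is the content of the ``$\Leftrightarrow$''), and it suffices to treat one of them.

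The first step is to reduce to a probability (Slepian-type) statement. Since $\psi$ is increasing, $\max_i\psi(X_i)=\psi(\max_i X_i)$ and likewise for $Y$. Writing $M_X:=\max_i X_i$ and $M_Y:=\max_i Y_i$, it is therefore enough to show that $M_X$ stochastically dominates $M_Y$, i.e.\ $P(M_X>s)\ge P(M_Y>s)$ for every $s\in R$; integrating this tail bound against the increasing function $\psi$ returns $E(\psi(M_X))\ge E(\psi(M_Y))$. Equivalently, one must show
\be
P\Big(\bigcap_{i=1}^n\{X_i\le s_i\}\Big)\le P\Big(\bigcap_{i=1}^n\{Y_i\le s_i\}\Big),
\ee
which I would in fact establish for arbitrary thresholds $s_1,\dots,s_n$ (taking all $s_i$ equal is the case actually needed).

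The heart of the argument is a Gaussian interpolation (smart path) computation. Assume $X$ and $Y$ are independent (only their covariance structures enter), set $Z(t)=\sqrt t\,X+\sqrt{1-t}\,Y$ for $t\in[0,1]$, and for a $C^2$ function $F:R^n\to R$ with bounded derivatives put $\phi(t)=E(F(Z(t)))$. Differentiating in $t$ and applying Gaussian integration by parts (Stein's identity) to each term, using $E(X_iZ_j(t))=\sqrt t\,E(X_iX_j)$ and $E(Y_iZ_j(t))=\sqrt{1-t}\,E(Y_iY_j)$ so that the $1/\sqrt t$ and $1/\sqrt{1-t}$ factors cancel, one gets
\be
\phi'(t)=\frac12\sum_{i,j}\big(E(X_iX_j)-E(Y_iY_j)\big)\,E\big(\partial_{ij}F(Z(t))\big).
\ee
Hypothesis 1 kills the diagonal terms, and hypothesis 2 makes the off-diagonal coefficients $\le 0$. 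Choosing $F(\x)=\prod_{i=1}^n g(s_i-x_i)$ with $g$ a smooth nondecreasing mollification of $\1_{[0,\infty)}$ satisfying $0\le g\le 1$ and $g'\ge 0$, one finds for $i\ne j$ that $\partial_{ij}F(\x)=g'(s_i-x_i)\,g'(s_j-x_j)\prod_{k\ne i,j}g(s_k-x_k)\ge 0$, hence $\phi'(t)\le 0$ on $(0,1)$ and so $E(F(X))=\phi(1)\le\phi(0)=E(F(Y))$. Letting $g\downarrow\1_{[0,\infty)}$ and using dominated convergence yields the displayed inequality, and the theorem follows.

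The main obstacle is precisely this interpolation step, where the delicate points are: justifying the differentiation under the expectation and the integration-by-parts identity (for which $F\in C^2$ with bounded derivatives is enough, and the mollifiers $g$ can be built with that property); the behaviour at $t=0,1$ (the formula for $\phi'$ holds on the open interval, and $\phi$ is continuous on $[0,1]$); and the two limiting passages, $g\downarrow\1_{[0,\infty)}$ and then from the threshold inequality to $E(\psi(M_X))\ge E(\psi(M_Y))$, which rest on dominated/monotone convergence and on the elementary fact that a real random variable stochastically dominating another has an at-least-as-large expectation of every increasing function (the relevant expectations being assumed to exist, as the statement implicitly requires). The conceptual crux, however, is the sign bookkeeping in the formula for $\phi'(t)$: the diagonal cancellation coming from hypothesis 1 and the nonnegativity of $\partial_{ij}F$ coming from the monotonicity of $g$.
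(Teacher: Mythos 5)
Your proof is correct, but note that the paper does not actually prove this statement at all: Theorem~\ref{thm:Gordonpos1} is imported verbatim from \cite{Gordon85} (it is the classical Slepian comparison in its ``expectation of an increasing function'' form), so there is no in-text argument to compare yours against. What you have supplied is the standard self-contained proof of that cited result, and it is sound: the reflection $X\mapsto -X$ together with $\psi\mapsto -\psi(-\cdot)$ correctly reduces the $\min$ form to the $\max$ form (and since you establish both assertions, the stated ``$\Leftrightarrow$'' --- really an equivalence of the two statements quantified over increasing $\psi$ --- holds trivially); the identity $\max_i\psi(X_i)=\psi(\max_i X_i)$ reduces everything to stochastic domination of $\max_i X_i$ over $\max_i Y_i$; and the interpolation computation is the right one, with the $1/\sqrt{t}$ and $1/\sqrt{1-t}$ factors cancelling after Gaussian integration by parts, hypothesis~1 removing the diagonal, and hypothesis~2 combined with $\partial_{ij}F\geq 0$ for $i\neq j$ (with $F(\x)=\prod_k g(s_k-x_k)$, $g$ smooth and nondecreasing) forcing $\phi'\leq 0$ on $(0,1)$. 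The limiting passages you flag are all routine (take $g_m(x)=h(mx+1)$ with $h$ smooth, nondecreasing, $h\equiv 0$ on $(-\infty,0]$ and $h\equiv 1$ on $[1,\infty)$, so that $g_m\downarrow \1_{[0,\infty)}$ pointwise and dominated convergence applies with no worries about atoms), and integrability of $\psi(\max_i X_i)$ is indeed implicitly assumed by the statement. The only thing your route does not deliver, and which Gordon's original argument does, is the more general min--max comparison used later in the paper as Theorem~\ref{thm:Gordonneg1}; for the present statement, however, your Slepian-type argument is exactly what is needed and arguably more elementary than invoking the full Gordon machinery.
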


In our recent work \cite{StojnicHopBnds10} we rely on the above theorem to create an upper-bound on the ground state energy of the positive Hopfield model. However, the strategy employed in \cite{StojnicHopBnds10} only a basic version of the above theorem where $\psi(x)=x$. Here we will substantially upgrade the strategy by looking at a very simple (but way better) different version of $\psi()$.

We start by reformulating the problem in (\ref{eq:sqrtposham1}) in the following way
\begin{equation}
\xi_p=\max_{\x\in\{-\frac{1}{\sqrt{n}},\frac{1}{\sqrt{n}}\}^n}\max_{\|\y\|_2=1}\y^TH\x.\label{eq:sqrtposham2}
\end{equation}
We do mention without going into details that the ground state energies will concentrate in the thermodynamic limit and hence we will mostly focus on the expected value of $\xi_p$ (one can then easily adapt our results to describe more general probabilistic concentrating properties of ground state energies). The following is then a direct application of Theorem \ref{thm:Gordonpos1}.
\begin{lemma}
Let $H$ be an $m\times n$ matrix with i.i.d. standard normal components. Let $\g$ and $\h$ be $m\times 1$ and $n\times 1$ vectors, respectively, with i.i.d. standard normal components. Also, let $g$ be a standard normal random variable and let $c_3$ be a positive constant. Then
\begin{equation}
E(\max_{\x\in\{-\frac{1}{\sqrt{n}},\frac{1}{\sqrt{n}}\}^n,\|\y\|_2=1}e^{c_3(\y^T H\x + g)})\leq E(\max_{\x\in\{-\frac{1}{\sqrt{n}},\frac{1}{\sqrt{n}}\}^n,\|\y\|_2=1}e^{c_3(\g^T\y+\h^T\x)}).\label{eq:posexplemma}
\end{equation}\label{lemma:posexplemma}
\end{lemma}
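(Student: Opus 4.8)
The plan is to verify the two hypotheses of Theorem \ref{thm:Gordonpos1} for a suitable pair of Gaussian processes indexed by the (discretized) pairs $(\x,\y)$, and then to instantiate the theorem with the increasing function $\psi(t)=e^{c_3 t}$ and with the max-side of the equivalence. First I would fix the index set to be pairs $(\x,\y)$ with $\x\in\{-\frac{1}{\sqrt{n}},\frac{1}{\sqrt{n}}\}^n$ and $\y$ ranging over a fine finite net of the sphere $\|\y\|_2=1$; since the final claim is about a continuous maximum, I would either argue directly on the sphere via a limiting/continuity argument or invoke the standard fact that Gordon's comparison passes to the sup over such nets and then to the sup over the sphere by continuity of both processes in $\y$ (the whole excerpt is already implicitly working at this level of rigor, so a brief remark suffices).

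The two processes are the left process $X_{(\x,\y)}=\y^T H\x + g$, where $g$ is an independent standard normal, and the right process $Y_{(\x,\y)}=\g^T\y+\h^T\x$. The core computation is the covariance check. For the left process, $E[(\y^T H\x)(\y'^T H\x')] = (\y^T\y')(\x^T\x')$ because $H$ has i.i.d.\ standard normal entries, and the independent $g$ contributes an extra $+1$ to every covariance; hence $E(X_{(\x,\y)}X_{(\x',\y')}) = (\y^T\y')(\x^T\x') + 1$. For the right process, independence of $\g$ and $\h$ gives $E(Y_{(\x,\y)}Y_{(\x',\y')}) = (\y^T\y') + (\x^T\x')$. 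Condition 1 (equal variances) then reads $\|\y\|_2^2\|\x\|_2^2 + 1 = \|\y\|_2^2 + \|\x\|_2^2$, which holds precisely because $\|\y\|_2=1$ and $\|\x\|_2^2 = n\cdot\frac{1}{n}=1$, so both sides equal $2$. Condition 2 requires $(\y^T\y')(\x^T\x') + 1 \leq (\y^T\y') + (\x^T\x')$ for $(\x,\y)\neq(\x',\y')$; writing $a=\y^T\y'$ and $b=\x^T\x'$ with $a,b\in[-1,1]$, this is the inequality $1-a-b+ab\geq 0$, i.e.\ $(1-a)(1-b)\geq 0$, which is always true. So both Slepian/Gordon hypotheses hold on the whole index set.

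With the hypotheses verified, I would apply Theorem \ref{thm:Gordonpos1} with $\psi(t)=e^{c_3 t}$, which is increasing on the real line for any positive constant $c_3$. The ``$E(\max_i\psi(X_i))\geq E(\max_i\psi(Y_i))$'' direction of the stated equivalence then gives exactly
\[
E\Big(\max_{\x,\y}e^{c_3(\y^T H\x + g)}\Big)\geq E\Big(\max_{\x,\y}e^{c_3(\g^T\y+\h^T\x)}\Big),
\]
which, after noting that the roles of $X$ and $Y$ in the desired conclusion are the reverse of what I set up, is the claimed inequality \eqref{eq:posexplemma}; alternatively I would simply relabel the processes from the start so that $X$ is the $\g,\h$ process and $Y$ is the $H$ process, making the conclusion come out with the stated orientation directly.

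The only genuine subtlety — and the step I expect to need the most care — is the passage from a finite index set (required for the literal statement of Gordon's theorem) to the continuous maximum over $\|\y\|_2=1$ that appears in the lemma. The standard route is to take an increasing sequence of finite nets $N_k$ with $\bigcup_k N_k$ dense in the sphere, apply the comparison on each $N_k$, and use monotone/dominated convergence together with almost-sure continuity of $\y\mapsto\y^T H\x$ and $\y\mapsto\g^T\y$ (and finiteness of the relevant exponential moments, which follows since $\max_{\x,\y}$ of a Gaussian-type functional has Gaussian tails) to pass to the limit on both sides. Everything else is the routine covariance bookkeeping sketched above, and the key algebraic fact powering Condition 2 is just $(1-\y^T\y')(1-\x^T\x')\geq0$.
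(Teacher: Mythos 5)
Your proposal is correct and follows essentially the same route as the paper: index the processes by the pairs $(\x,\y)$, check that both variances equal $2$, factor the covariance gap as $(1-\y^T\y')(1-\x^T\x')\geq 0$, and apply Theorem \ref{thm:Gordonpos1} with the increasing function $\psi(t)=e^{c_3 t}$. The only wrinkle is that your mid-proof assignment of which process plays $X$ and which plays $Y$ makes Condition 2 read as $(1-a)(1-b)\leq 0$ rather than $\geq 0$, but your closing remark about relabeling (taking $X$ to be the $\g,\h$ process and $Y$ the $H$ process, exactly as the paper does) resolves this and yields the stated inequality in the correct orientation.
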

\begin{proof}
As mentioned above, the proof is a standard/direct application of Theorem \ref{thm:Gordonpos1}. We will sketch it for completeness. Namely, one starts by defining processes $X_i$ and $Y_i$ in the following way
\begin{equation}
Y_i=(\y^{(i)})^T H\x^{(i)} + g\quad X_i=\g^T\y^{(i)}+\h^T\x^{(i)}.\label{eq:posexplemmaproof1}
\end{equation}
Then clearly
\begin{equation}
EY_i^2=EX_i^2=\|\y^{(i)}\|_2^2+\|\x^{(i)}\|_2^2=2.\label{eq:posexplemmaproof2}
\end{equation}
One then further has
\begin{eqnarray}
EY_iY_l & = & (\y^{(i)})^T\y^{(l)}(\x^{(l)})^T\x^{(i)}+1\nonumber \\
EX_iX_l & = & (\y^{(i)})^T\y^{(l)}+(\x^{(l)})^T\x^{(i)}.\label{eq:posexplemmaproof3}
\end{eqnarray}
And after a small algebraic transformation
\begin{eqnarray}
EY_iY_l-EX_iX_l & = & (1-(\y^{(i)})^T\y^{(l)})-(\x^{(l)})^T\x^{(i)}(1-(\y^{(i)})^T\y^{(l)}) \nonumber \\
& = & (1-(\x^{(l)})^T\x^{(i)})(1-(\y^{(i)})^T\y^{(l)})\nonumber \\
& \geq & 0.\label{eq:posexplemmaproof4}
\end{eqnarray}
Combining (\ref{eq:posexplemmaproof2}) and (\ref{eq:posexplemmaproof4}) and using results of Theorem \ref{thm:Gordonpos1} one then easily obtains (\ref{eq:posexplemma}).
\end{proof}

One then easily has
\begin{multline}
E(e^{c_3(\max_{\x\in\{-\frac{1}{\sqrt{n}},\frac{1}{\sqrt{n}}\}^n}\|H\x\|_2+g)})=
E(e^{c_3(\max_{\x\in\{-\frac{1}{\sqrt{n}},\frac{1}{\sqrt{n}}\}^n}\max_{\|\y\|_2=1}y^TH\x+g)})\\
=E(\max_{\x\in\{-\frac{1}{\sqrt{n}},\frac{1}{\sqrt{n}}\}^n}\max_{\|\y\|_2=1}(e^{c_3(\y^TH\x+g)})).\label{eq:chpos1}
\end{multline}
Connecting (\ref{eq:chpos1}) and results of Lemma \ref{lemma:posexplemma} we have
\begin{multline}
E(e^{c_3(\max_{\x\in\{-\frac{1}{\sqrt{n}},\frac{1}{\sqrt{n}}\}^n}\|H\x\|_2+g)})=
E(\max_{\x\in\{-\frac{1}{\sqrt{n}},\frac{1}{\sqrt{n}}\}^n}\max_{\|\y\|_2=1}(e^{c_3(\y^TH\x+g)}))\\
\leq E(\max_{\x\in\{-\frac{1}{\sqrt{n}},\frac{1}{\sqrt{n}}\}^n}\max_{\|\y\|_2=1}(e^{c_3(\g^T\y+h^T\x)}))
=E(\max_{\x\in\{-\frac{1}{\sqrt{n}},\frac{1}{\sqrt{n}}\}^n}(e^{c_3\h^T\x})\max_{\|\y\|_2=1}(e^{c_3\g^T\y}))\\
=E(\max_{\x\in\{-\frac{1}{\sqrt{n}},\frac{1}{\sqrt{n}}\}^n}(e^{c_3\h^T\x}))E(\max_{\|\y\|_2=1}(e^{c_3\g^T\y})),\label{eq:chpos2}
\end{multline}
where the last equality follows because of independence of $\g$ and $\h$. Connecting beginning and end of (\ref{eq:chpos2}) one has
\begin{equation}
E(e^{c_3g})E(e^{c_3(\max_{\x\in\{-\frac{1}{\sqrt{n}},\frac{1}{\sqrt{n}}\}^n}\|H\x\|_2)})\leq
E(\max_{\x\in\{-\frac{1}{\sqrt{n}},\frac{1}{\sqrt{n}}\}^n}(e^{c_3\h^T\x}))E(\max_{\|\y\|_2=1}(e^{c_3\g^T\y})).\label{eq:chpos3}
\end{equation}
Applying $\log$ on both sides of (\ref{eq:chpos3}) we further have
\begin{equation}
\log(E(e^{c_3g}))+\log(E(e^{c_3(\max_{\x\in\{-\frac{1}{\sqrt{n}},\frac{1}{\sqrt{n}}\}^n}\|H\x\|_2)}))\leq
\log(E(\max_{\x\in\{-\frac{1}{\sqrt{n}},\frac{1}{\sqrt{n}}\}^n}(e^{c_3\h^T\x})))+\log(E(\max_{\|\y\|_2=1}(e^{c_3\g^T\y}))),\label{eq:chpos4}
\end{equation}
or in a slightly more convenient form
\begin{equation}
\log(E(e^{c_3(\max_{\x\in\{-\frac{1}{\sqrt{n}},\frac{1}{\sqrt{n}}\}^n}\|H\x\|_2)}))\leq
-\log(E(e^{c_3g}))+\log(E(\max_{\x\in\{-\frac{1}{\sqrt{n}},\frac{1}{\sqrt{n}}\}^n}(e^{c_3\h^T\x})))+\log(E(\max_{\|\y\|_2=1}(e^{c_3\g^T\y}))).\label{eq:chpos5}
\end{equation}
It is also relatively easy to see that
\begin{equation}
\log(E(e^{c_3(\max_{\x\in\{-\frac{1}{\sqrt{n}},\frac{1}{\sqrt{n}}\}^n}\|H\x\|_2)}))\geq
E\log(e^{c_3(\max_{\x\in\{-\frac{1}{\sqrt{n}},\frac{1}{\sqrt{n}}\}^n}\|H\x\|_2)})=Ec_3(\max_{\x\in\{-\frac{1}{\sqrt{n}},\frac{1}{\sqrt{n}}\}^n}\|H\x\|_2),
\label{eq:chpos6}
\end{equation}
and
\begin{equation}
-\log(E(e^{c_3g}))=-\log(e^{\frac{c_3^2}{2}})=-\frac{c_3^2}{2}.\label{eq:chpos7}
\end{equation}
Connecting (\ref{eq:chpos5}), (\ref{eq:chpos6}), and (\ref{eq:chpos7}) one finally can establish an upper bound on the expected value of the ground state energy of the positive Hopfiled model
\begin{equation}
E(\max_{\x\in\{-\frac{1}{\sqrt{n}},\frac{1}{\sqrt{n}}\}^n}\|H\x\|_2)\leq
-\frac{c_3}{2}+\frac{1}{c_3}\log(E(\max_{\x\in\{-\frac{1}{\sqrt{n}},\frac{1}{\sqrt{n}}\}^n}(e^{c_3\h^T\x})))
+\frac{1}{c_3}\log(E(\max_{\|\y\|_2=1}(e^{c_3\g^T\y}))).\label{eq:chpos8}
\end{equation}
Let $c_3=c_3^{(s)}\sqrt{n}$ where $c_3^{(s)}$ is a constant independent of $n$. Then (\ref{eq:chpos8}) becomes
\begin{eqnarray}
\hspace{-.5in}\frac{E(\max_{\x\in\{-\frac{1}{\sqrt{n}},\frac{1}{\sqrt{n}}\}^n}\|H\x\|_2)}{\sqrt{n}}
& \leq &
-\frac{c_3^{(s)}}{2}+\frac{1}{nc_3^{(s)}}\log(E(\max_{\x\in\{-1,1\}^n}(e^{c_3^{(s)}\h^T\x})))
+\frac{1}{nc_3^{(s)}}\log(E(\max_{\|\y\|_2=1}(e^{c_3^{(s)}\sqrt{n}\g^T\y})))\nonumber \\
& = &
-\frac{c_3^{(s)}}{2}+\frac{1}{c_3^{(s)}}\log(E(e^{c_3^{(s)}|\h_1|}))
+\frac{1}{nc_3^{(s)}}\log(E(\max_{\|\y\|_2=1}(e^{c_3^{(s)}\sqrt{n}\g^T\y})))\nonumber \\
& = &
-\frac{c_3^{(s)}}{2}+\frac{c_3^{(s)}}{2}+\frac{1}{c_3^{(s)}}\log(\mbox{erfc}(-\frac{c_3^{(s)}}{\sqrt{2}}))
+\frac{1}{nc_3^{(s)}}\log(E(\max_{\|\y\|_2=1}(e^{c_3^{(s)}\sqrt{n}\g^T\y})))\nonumber \\
& = & \frac{1}{c_3^{(s)}}\log(\mbox{erfc}(-\frac{c_3^{(s)}}{\sqrt{2}}))
+\frac{1}{nc_3^{(s)}}\log(E(\max_{\|\y\|_2=1}(e^{c_3^{(s)}\sqrt{n}\g^T\y}))).\label{eq:chpos9}
\end{eqnarray}
One should now note that the above bound is effectively correct for any positive constant $c_3^{(s)}$. The only thing that is then left to be done so that the above bound becomes operational is to estimate $E(\max_{\|\y\|_2=1}(e^{c_3^{(s)}\sqrt{n}\g^T\y}))=Ee^{c_3^{(s)}\sqrt{n}\|\g\|_2}$. Pretty good estimates for this quantity can be obtained for any $n$. However, to facilitate the exposition we will focus only on the large $n$ scenario. In that case one can use the saddle point concept applied in \cite{SPH}. However, here we will try to avoid the entire presentation from there and instead present the core neat idea that has much wider applications. Namely, we start with the following identity
\begin{equation}
\|\g\|_2=\min_{\gamma\geq 0}(\frac{\|\g\|_2^2}{4\gamma}+\gamma).\label{eq:gamaiden}
\end{equation}
Then
\begin{multline}
\frac{1}{nc_3^{(s)}}\log(Ee^{c_3^{(s)}\sqrt{n}\|\g\|_2})=\frac{1}{nc_3^{(s)}}\log(Ee^{c_3^{(s)}\sqrt{n}\min_{\gamma\geq 0}(\frac{\|\g\|_2^2}{4\gamma}+\gamma)})
\doteq \frac{1}{nc_3^{(s)}}\min_{\gamma\geq 0}\log(Ee^{c_3^{(s)}\sqrt{n}(\frac{\|\g\|_2^2}{4\gamma}+\gamma)})\\
=\min_{\gamma\geq 0}(\frac{\gamma}{\sqrt{n}}+\frac{1}{c_3^{(s)}}\log(Ee^{c_3^{(s)}\sqrt{n}(\frac{\g_i^2}{4\gamma})})),\label{eq:gamaiden1}
\end{multline}
where $\doteq$ stands for equality when $n\rightarrow \infty$. $\doteq$ is exactly what was shown in \cite{SPH}. In fact, a bit more is shown in \cite{SPH} and a few corrective terms were estimated for finite $n$ (for our needs here though, even just replacing $\doteq$ with $\leq$ inequality suffices). Now if one sets $\gamma=\gamma^{(s)}\sqrt{n}$ then (\ref{eq:gamaiden1}) gives
\begin{equation}
\frac{1}{nc_3^{(s)}}\log(Ee^{c_3^{(s)}\sqrt{n}\|\g\|_2})
=\min_{\gamma^{(s)}\geq 0}(\gamma^{(s)}+\frac{1}{c_3^{(s)}}\log(Ee^{c_3^{(s)}(\frac{\g_i^2}{4\gamma^{(s)}})}))
=\min_{\gamma^{(s)}\geq 0}(\gamma^{(s)}-\frac{\alpha}{2c_3^{(s)}}\log(1-\frac{c_3^{(s)}}{2\gamma^{(s)}})).\label{eq:gamaiden2}
\end{equation}
After solving the last minimization one obtains
\begin{equation}
\widehat{\gamma^{(s)}}=\frac{2c_3^{(s)}+\sqrt{4(c_3^{(s)})^2+16\alpha}}{8}.\label{eq:gamaiden3}
\end{equation}
Connecting (\ref{eq:chpos9}), (\ref{eq:gamaiden1}), (\ref{eq:gamaiden2}), and (\ref{eq:gamaiden3}) one finally has
\begin{equation}
\hspace{-.5in}\frac{E(\max_{\x\in\{-\frac{1}{\sqrt{n}},\frac{1}{\sqrt{n}}\}^n}\|H\x\|_2)}{\sqrt{n}}
 \leq  \frac{1}{c_3^{(s)}}\log(\mbox{erfc}(-\frac{c_3^{(s)}}{\sqrt{2}}))
+\widehat{\gamma^{(s)}}-\frac{\alpha}{2c_3^{(s)}}\log(1-\frac{c_3^{(s)}}{2\widehat{\gamma^{(s)}}}),\label{eq:ubmorsoph}
\end{equation}
where clearly $\widehat{\gamma^{(s)}}$ is as in (\ref{eq:gamaiden3}). As mentioned earlier the above inequality holds for any $c_3^{(s)}$. Of course to make it as tight as possible one then has
\begin{equation}
\hspace{-.5in}\frac{E(\max_{\x\in\{-\frac{1}{\sqrt{n}},\frac{1}{\sqrt{n}}\}^n}\|H\x\|_2)}{\sqrt{n}}
 \leq  \min_{c_3^{(s)}\geq 0} \left (\frac{1}{c_3^{(s)}}\log(\mbox{erfc}(-\frac{c_3^{(s)}}{\sqrt{2}}))
+\widehat{\gamma^{(s)}}-\frac{\alpha}{2c_3^{(s)}}\log(1-\frac{c_3^{(s)}}{2\widehat{\gamma^{(s)}}}\right ).\label{eq:ubmorsoph1}
\end{equation}

We summarize our results from this subsection in the following lemma.

\begin{lemma}
Let $H$ be an $m\times n$ matrix with i.i.d. standard normal components. Let $n$ be large and let $m=\alpha n$, where $\alpha>0$ is a constant independent of $n$. Let $\xi_p$ be as in (\ref{eq:sqrtposham1}). Let $\widehat{\gamma^{(s)}}$ be such that
\begin{equation}
\widehat{\gamma^{(s)}}=\frac{2c_3^{(s)}+\sqrt{4(c_3^{(s)})^2+16\alpha}}{8}.\label{eq:gamaiden3thm}
\end{equation}
and $\xi_p^{(u)}$ be a scalar such that
\begin{equation}
\xi_p^{(u)}=\min_{c_3^{(s)}\geq 0} \left (\frac{1}{c_3^{(s)}}\log(\mbox{erfc}(-\frac{c_3^{(s)}}{\sqrt{2}}))
+\widehat{\gamma^{(s)}}-\frac{\alpha}{2c_3^{(s)}}\log(1-\frac{c_3^{(s)}}{2\widehat{\gamma^{(s)}}}\right ).\label{eq:condxipuposgenlemma}
\end{equation}
Then
\begin{equation}
\frac{E\xi_p}{\sqrt{n}}\leq\xi_p^{(u)}.\label{eq:posgenexplemma}
\end{equation}
Moreover,
\begin{eqnarray}
& & \lim_{n\rightarrow\infty}P(\max_{\x\in\{-\frac{1}{\sqrt{n}},\frac{1}{\sqrt{n}}\}^n}(\|H\x\|_2)\leq \xi_p^{(u)})\geq 1\nonumber \\
& \Leftrightarrow & \lim_{n\rightarrow\infty}P(\xi_p\leq \xi_p^{(u)})\geq 1 \nonumber \\
& \Leftrightarrow & \lim_{n\rightarrow\infty}P(\xi_p^2\leq (\xi_p^{(u)})^2)\geq 1. \label{eq:posgenproblemma}
\end{eqnarray}
In particular, when $\alpha=1$
\begin{equation}
\frac{E\xi_p}{\sqrt{n}}\leq\xi_p^{(u)}=1.7832.\label{eq:posgenexplemma1}
\end{equation}
\label{lemma:posgenlemma}
\end{lemma}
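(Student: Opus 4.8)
The plan is essentially to assemble the chain of inequalities derived in the body of this subsection into the packaged statement of the lemma, so very little genuinely new work is required. First I would recall the reformulation $\xi_p=\max_{\x\in\{-\frac{1}{\sqrt{n}},\frac{1}{\sqrt{n}}\}^n}\max_{\|\y\|_2=1}\y^TH\x$ from (\ref{eq:sqrtposham2}), and apply Lemma \ref{lemma:posexplemma} together with the identity (\ref{eq:chpos1}) that pushes the exponential inside the double maximum. Taking logarithms and using independence of $\g$ and $\h$ gives (\ref{eq:chpos5}); the two elementary facts that $\log E e^{c_3 Z}\geq c_3 EZ$ by Jensen (used in (\ref{eq:chpos6})) and $-\log Ee^{c_3 g}=-c_3^2/2$ (used in (\ref{eq:chpos7})) then yield the master inequality (\ref{eq:chpos8}). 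The substitution $c_3=c_3^{(s)}\sqrt{n}$, together with the exact computation $E e^{c_3^{(s)}|\h_1|}=e^{(c_3^{(s)})^2/2}\,\mbox{erfc}(-c_3^{(s)}/\sqrt{2})$ for a standard normal $\h_1$, gives the normalized bound (\ref{eq:chpos9}), in which the only term not in closed form is $\frac{1}{nc_3^{(s)}}\log E e^{c_3^{(s)}\sqrt{n}\|\g\|_2}$.

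Second, I would handle that remaining Gaussian term exactly as in (\ref{eq:gamaiden})--(\ref{eq:gamaiden3}): use the variational identity $\|\g\|_2=\min_{\gamma\geq 0}(\frac{\|\g\|_2^2}{4\gamma}+\gamma)$ to linearize the norm, invoke the $n\to\infty$ interchange $\doteq$ of \cite{SPH} (noting, as the text does, that even replacing $\doteq$ by $\leq$ is enough for an upper bound), rescale $\gamma=\gamma^{(s)}\sqrt{n}$, evaluate the moment generating function of $\g_i^2/(4\gamma^{(s)})$ to get $-\frac{\alpha}{2c_3^{(s)}}\log(1-\frac{c_3^{(s)}}{2\gamma^{(s)}})$ (valid for $\gamma^{(s)}>c_3^{(s)}/2$), and minimize over $\gamma^{(s)}$ to obtain the closed form $\widehat{\gamma^{(s)}}$ in (\ref{eq:gamaiden3thm}). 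Plugging this back produces precisely (\ref{eq:ubmorsoph}), and since $c_3^{(s)}\geq 0$ was arbitrary one optimizes over it to get $\xi_p^{(u)}$ as in (\ref{eq:condxipuposgenlemma}), establishing (\ref{eq:posgenexplemma}).

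Third, for the probabilistic statement (\ref{eq:posgenproblemma}) I would argue by concentration rather than via the expectation bound alone: $\|H\x\|_2$ as a function of the Gaussian matrix $H$ is $1$-Lipschitz for each fixed $\x$ with $\|\x\|_2=1$, hence $\xi_p=\max_{\x}\|H\x\|_2$ is a $1$-Lipschitz function of the $mn$ i.i.d.\ standard normal entries, so by the Gaussian concentration inequality $\xi_p/\sqrt{n}$ concentrates around $E\xi_p/\sqrt{n}$ with fluctuations of order $1/\sqrt{n}\to 0$; combining this with $E\xi_p/\sqrt{n}\leq \xi_p^{(u)}$ gives $\lim_{n\to\infty}P(\xi_p\leq \xi_p^{(u)})\geq 1$. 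Strictly one must absorb the vanishing fluctuation into the bound; since $\xi_p^{(u)}$ is defined as an infimum that is attained strictly inside the feasible region, any $\xi_p^{(u)}+\epsilon$ works and one lets $\epsilon\downarrow 0$ after the limit, or simply states the inequality up to $o(1)$ as is customary. The equivalences with $\xi_p^2\leq(\xi_p^{(u)})^2$ are immediate since $\xi_p\geq 0$. Finally, setting $\alpha=1$ and numerically minimizing the one-dimensional expression in (\ref{eq:condxipuposgenlemma}) gives the stated value $\xi_p^{(u)}=1.7832$.

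The main obstacle, such as it is, is not in any single step but in making the $n\to\infty$ passages rigorous: the $\doteq$ of \cite{SPH} that interchanges the $\min_\gamma$ with $\log E e^{(\cdot)}$, and the concentration argument that upgrades the expectation bound to a high-probability bound. Both are standard — the former follows from a Laplace/saddle-point estimate and is harmless here since only the $\leq$ direction is needed, and the latter from Gaussian isoperimetry — so the proof is essentially bookkeeping once these two facts are cited.
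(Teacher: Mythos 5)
Your proposal is correct and follows essentially the same route as the paper: the expectation bound is exactly the chain (\ref{eq:chpos1})--(\ref{eq:ubmorsoph1}) assembled in the body of the subsection, and the probability statement is handled by the same standard Gaussian concentration (Lipschitz/isoperimetry) argument the paper delegates to \cite{StojnicHopBnds10}. Your explicit verification of $Ee^{c_3^{(s)}|\h_1|}=e^{(c_3^{(s)})^2/2}\mbox{erfc}(-c_3^{(s)}/\sqrt{2})$ and of the Lipschitz constant of $\xi_p$ as a function of $H$ are welcome details, but nothing in your argument departs from the paper's.
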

\begin{proof}
The first part of the proof related to the expected values follows from the above discussion. The probability part follows by the concentration arguments that are easy to establish (see, e.g discussion in \cite{StojnicHopBnds10}).
\end{proof}

One way to see how the above lemma works in practice is (as specified in the lemma) to choose $\alpha=1$ to obtain $\xi_p^{(u)}=1.7832$. This value is substantially better than $1.7978$ offered in \cite{StojnicHopBnds10}.

\section{Negative Hopfield form}
\label{sec:neghop}

In this section we will look at the following optimization problem (which clearly is the key component in estimating the ground state energy in the thermodynamic limit)
\begin{equation}
\min_{\x\in\{-\frac{1}{\sqrt{n}},\frac{1}{\sqrt{n}}\}^n}\|H\x\|_2^2.\label{eq:negham1}
\end{equation}
For a deterministic (given fixed) $H$ this problem is of course known to be NP-hard (as (\ref{eq:posham1}), it essentially falls under the class of binary quadratic optimization problems). Instead of looking at the problem in (\ref{eq:negham1}) in a deterministic way i.e. in a way that assumes that matrix $H$ is deterministic, we will adopt the strategy of the previous section and look at it in a statistical scenario. Also as in previous section, we will assume that the elements of matrix $H$ are i.i.d. standard normals. In the remainder of this section we will look at possible ways to estimate the optimal value of the optimization problem in (\ref{eq:negham1}). In fact we will introduce a strategy similar the one presented in the previous section to create a lower-bound on the optimal value of (\ref{eq:negham1}).

\subsection{Lower-bounding ground state energy of the negative Hopfield form}
\label{sec:neghoplb}

In this section we will look at problem from (\ref{eq:negham1}).In fact, to be a bit more precise, as in the previous section, in order to make the exposition as simple as possible, we will look at its a slight variant given below
\begin{equation}
\xi_n=\min_{\x\in\{-\frac{1}{\sqrt{n}},\frac{1}{\sqrt{n}}\}^n}\|H\x\|_2.\label{eq:sqrtnegham1}
\end{equation}
As mentioned above, we will assume that the elements of $H$ are i.i.d. standard normal random variables.

First we recall (and slightly extend) the following result from \cite{Gordon85} that relates to statistical properties of certain Gaussian processes. This result is essentially a negative counterpart to the one given in Theorem \ref{thm:Gordonpos1}
\begin{theorem}(\cite{Gordon85})
\label{thm:Gordonneg1} Let $X_{ij}$ and $Y_{ij}$, $1\leq i\leq n,1\leq j\leq m$, be two centered Gaussian processes which satisfy the following inequalities for all choices of indices
\begin{enumerate}
\item $E(X_{ij}^2)=E(Y_{ij}^2)$
\item $E(X_{ij}X_{ik})\geq E(Y_{ij}Y_{ik})$
\item $E(X_{ij}X_{lk})\leq E(Y_{ij}Y_{lk}), i\neq l$.
\end{enumerate}
Let $\psi()$ be an increasing function on the real axis. Then
\begin{equation*}
E(\min_{i}\max_{j}\psi(X_{ij}))\leq E(\min_{i}\max_{j}\psi(Y_{ij})).
\end{equation*}
Moreover, let $\psi()$ be a decreasing function on the real axis. Then
\begin{equation*}
E(\max_{i}\min_{j}\psi(X_{ij}))\geq E(\max_{i}\min_{j}\psi(Y_{ij})).
\end{equation*}
\begin{proof}
The proof of all statements but the last one is of course given in \cite{Gordon85}. Here we just briefly sketch how to get the last statement as well. So, let $\psi()$ be a decreasing function on the real axis. Then $-\psi()$ is an increasing function on the real axis and by first part of the theorem we have
\begin{equation*}
E(\min_{i}\max_{j}-\psi(X_{ij}))\leq E(\min_{i}\max_{j}-\psi(Y_{ij})).
\end{equation*}
Changing the inequality sign we also have
\begin{equation*}
-E(\min_{i}\max_{j}-\psi(X_{ij}))\geq -E(\min_{i}\max_{j}-\psi(Y_{ij})),
\end{equation*}
and finally
\begin{equation*}
E(\max_{i}\min_{j}\psi(X_{ij}))\geq E(\max_{i}\min_{j}\psi(Y_{ij})).
\end{equation*}
\end{proof}
\end{theorem}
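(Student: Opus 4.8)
The only substantive claim is the first (min--max, increasing $\psi$) inequality; the second follows from it verbatim by the $-\psi$ reduction already recorded in the statement, so I focus on the first. The plan is the Gaussian interpolation (Slepian/Gordon) method. I may assume $X_{ij}$ and $Y_{ij}$ live on a common probability space and are independent. I would first replace $\psi$ by an increasing $C^2$ approximation (so that $\psi'\ge 0$ everywhere) and replace the non-smooth functional $\min_i\max_j$ by the soft-min-of-soft-max surrogate
\[
F_\beta(z)=-\frac{1}{\beta}\log\sum_i\left(\sum_j e^{\beta\psi(z_{ij})}\right)^{-1},
\]
which converges pointwise to $\min_i\max_j\psi(z_{ij})$ as $\beta\to\infty$. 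For $t\in[0,1]$ I introduce the interpolating centered Gaussian field $Z_{ij}(t)=\sqrt{t}\,X_{ij}+\sqrt{1-t}\,Y_{ij}$, which reduces to $Y$ at $t=0$ and to $X$ at $t=1$.

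First I would differentiate $\phi(t):=E[F_\beta(Z(t))]$. The chain rule gives $\phi'(t)=\sum_{i,j}E\big[\partial_{ij}F_\beta(Z(t))\,(\tfrac{1}{2\sqrt{t}}X_{ij}-\tfrac{1}{2\sqrt{1-t}}Y_{ij})\big]$. Applying Gaussian integration by parts (Stein's identity) to each expectation, and using that independence of $X$ and $Y$ gives $\mathrm{Cov}(X_{ij},Z_{kl}(t))=\sqrt{t}\,EX_{ij}X_{kl}$ and $\mathrm{Cov}(Y_{ij},Z_{kl}(t))=\sqrt{1-t}\,EY_{ij}Y_{kl}$, the $\sqrt{t}$ and $\sqrt{1-t}$ prefactors cancel and one is left with
\[
\phi'(t)=\frac{1}{2}\sum_{(i,j),(k,l)}\big(EX_{ij}X_{kl}-EY_{ij}Y_{kl}\big)\,E\big[\partial^2_{ij,kl}F_\beta(Z(t))\big].
\]

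The crux, and the step I expect to be the main obstacle, is the sign pattern of the Hessian of $F_\beta$. Writing $w_i$ for the soft-min weights and $v_{ij}$ for the within-row soft-max weights (all nonnegative, summing to one over their free index), one has $\partial_{ij}F_\beta=w_i v_{ij}\psi'(z_{ij})\ge 0$. Differentiating once more, I would verify that for distinct rows $i\neq k$,
\[
\partial^2_{ij,kl}F_\beta=\beta\,w_i w_k\,v_{ij}v_{kl}\,\psi'(z_{ij})\psi'(z_{kl})\ \ge\ 0,
\]
while within a single row, $i=k$ with $j\neq l$,
\[
\partial^2_{ij,il}F_\beta=\beta\,w_i\,v_{ij}v_{il}\,\psi'(z_{ij})\psi'(z_{il})\,(w_i-2)\ \le\ 0,
\]
the last sign holding because $0\le w_i\le 1$. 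The diagonal pairs $(i,j)=(k,l)$ multiply the covariance difference $EX_{ij}^2-EY_{ij}^2=0$ (hypothesis~1) and drop out.

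Now I would match signs: hypothesis~2 supplies $EX_{ij}X_{il}-EY_{ij}Y_{il}\ge 0$ paired against the within-row Hessian $\le 0$, and hypothesis~3 supplies $EX_{ij}X_{kl}-EY_{ij}Y_{kl}\le 0$ (for $i\neq k$) paired against the cross-row Hessian $\ge 0$, so every surviving summand is $\le 0$ and hence $\phi'(t)\le 0$ on $[0,1]$. Integrating gives $EF_\beta(X)=\phi(1)\le\phi(0)=EF_\beta(Y)$. Finally I would let $\beta\to\infty$ and remove the $C^2$-smoothing of $\psi$, justifying the interchange of limit and expectation by dominated convergence (the surrogate $F_\beta$ is sandwiched by, and grows at most linearly in, $\max_{ij}|z_{ij}|$, which is Gaussian-integrable). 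This yields $E\min_i\max_j\psi(X_{ij})\le E\min_i\max_j\psi(Y_{ij})$, and the max--min inequality for decreasing $\psi$ then follows exactly by the $-\psi$ reduction already displayed. Beyond the Hessian sign computation, the only real care needed is in smoothing an arbitrary increasing $\psi$ and in the integrability justification for the $\beta\to\infty$ limit.
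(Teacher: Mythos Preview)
Your proof is correct and in fact does more than the paper. The paper does not prove the increasing-$\psi$ min--max inequality at all: it simply cites Gordon's 1985 paper for that statement and then derives the decreasing-$\psi$ max--min inequality from it by exactly the $-\psi$ reduction you also record. What you supply in addition is a self-contained proof of the increasing-$\psi$ statement via the Slepian--Gordon interpolation argument, which is precisely the method underlying the cited result. Your Hessian sign computations for the soft-min-of-soft-max surrogate check out: with $w_i=S_i^{-1}/T$ and $v_{ij}=e^{\beta\psi(z_{ij})}/S_i$ one indeed gets $\partial^2_{ij,kl}F_\beta=\beta w_iw_kv_{ij}v_{kl}\psi'(z_{ij})\psi'(z_{kl})\ge 0$ for $i\neq k$ and $\partial^2_{ij,il}F_\beta=\beta w_iv_{ij}v_{il}\psi'(z_{ij})\psi'(z_{il})(w_i-2)\le 0$ for $j\neq l$ since $w_i\in[0,1]$, and the sign matching with hypotheses~1--3 forces every surviving summand in $\phi'(t)$ to be nonpositive. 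One small simplification you might note: because
\[
\bigl|F_\beta(z)-\min_i\max_j\psi(z_{ij})\bigr|\le \frac{\max(\log n,\log m)}{\beta}
\]
uniformly in $z$, the $\beta\to\infty$ passage needs nothing beyond integrability of the limit itself. In summary, your reduction for the second statement is identical to the paper's, while for the first statement you supply the interpolation proof that the paper merely outsources to the reference.
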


In our recent work \cite{StojnicHopBnds10} we rely on the above theorem to create a lower-bound on the ground state energy of the negative Hopfield model. However, as was the case with the positive form, the strategy employed in \cite{StojnicHopBnds10} relied only on a basic version of the above theorem where $\psi(x)=x$. Similarly to what was done in the previous subsection, we will here substantially upgrade the strategy from \cite{StojnicHopBnds10} by looking at a very simple (but way better) different version of $\psi()$.

We start by reformulating the problem in (\ref{eq:sqrtposham1}) in the following way
\begin{equation}
\xi_n=\min_{\x\in\{-\frac{1}{\sqrt{n}},\frac{1}{\sqrt{n}}\}^n}\max_{\|\y\|_2=1}\y^TH\x.\label{eq:sqrtnegham2}
\end{equation}
As was the case with the positive form, we do mention without going into details that the ground state energies will again concentrate in the thermodynamic limit and hence we will mostly focus on the expected value of $\xi_n$ (one can then easily adapt our results to describe more general probabilistic concentrating properties of ground state energies). The following is then a direct application of Theorem \ref{thm:Gordonneg1}.
\begin{lemma}
Let $H$ be an $m\times n$ matrix with i.i.d. standard normal components. Let $\g$ and $\h$ be $m\times 1$ and $n\times 1$ vectors, respectively, with i.i.d. standard normal components. Also, let $g$ be a standard normal random variable and let $c_3$ be a positive constant. Then
\begin{equation}
E(\max_{\x\in\{-\frac{1}{\sqrt{n}},\frac{1}{\sqrt{n}}\}^n}\min_{\|\y\|_2=1}e^{-c_3(\y^T H\x + g)})\leq E(\max_{\x\in\{-\frac{1}{\sqrt{n}},\frac{1}{\sqrt{n}}\}^n}\min_{\|\y\|_2=1}e^{-c_3(\g^T\y+\h^T\x)}).\label{eq:negexplemma}
\end{equation}\label{lemma:negexplemma}
\end{lemma}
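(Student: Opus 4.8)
The plan is to mimic the proof of Lemma~\ref{lemma:posexplemma}, but now invoking the min-max (second) statement of Theorem~\ref{thm:Gordonneg1} with the decreasing function $\psi(t)=e^{-c_3 t}$. First I would set up the two centered Gaussian processes indexed by a pair of ``indices'' corresponding to the discrete vector $\x$ and the continuous vector $\y$ (with $\x$ playing the role of the $i$-index over which we take $\max$, and $\y$ playing the role of the $j$-index over which we take $\min$):
\begin{equation*}
Y_{\x,\y}=\y^T H\x + g,\qquad X_{\x,\y}=\g^T\y+\h^T\x.
\end{equation*}
Here $g$ is an auxiliary standard normal independent of $H$, and $\g,\h$ are independent standard normal vectors. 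The discretization of $\y$ needed to literally apply the finite-index theorem, followed by a limiting/continuity argument, is routine and I would only mention it in passing, exactly as the ``we will sketch it for completeness'' style of Lemma~\ref{lemma:posexplemma}.

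Next I would verify the three covariance hypotheses of Theorem~\ref{thm:Gordonneg1}. For the variances, $EY_{\x,\y}^2=EX_{\x,\y}^2=\|\y\|_2^2+\|\x\|_2^2=2$ since $\|\y\|_2=1$ and $\|\x\|_2=1$. For two index pairs sharing the same $\x$ (i.e.\ the $i=l$ case, condition~2), $EY_{\x,\y}Y_{\x,\y'}=(\y)^T\y'\,\|\x\|_2^2+1=(\y)^T\y'+1$ while $EX_{\x,\y}X_{\x,\y'}=(\y)^T\y'+\|\x\|_2^2=(\y)^T\y'+1$, so in fact $EX_{\x,\y}X_{\x,\y'}=EY_{\x,\y}Y_{\x,\y'}$ and condition~2 ($\geq$) holds with equality. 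For pairs with distinct $\x\neq\x^{(l)}$ (condition~3), the same computation as in \eqref{eq:posexplemmaproof3}--\eqref{eq:posexplemmaproof4} gives
\begin{equation*}
EY_{\x,\y}Y_{\x^{(l)},\y'}-EX_{\x,\y}X_{\x^{(l)},\y'}=(1-(\x^{(l)})^T\x)(1-(\y)^T\y')\geq 0,
\end{equation*}
which is condition~3 ($EX\leq EY$). Then applying the second (max-min, decreasing-$\psi$) conclusion of Theorem~\ref{thm:Gordonneg1} with $\psi(t)=e^{-c_3 t}$ yields exactly \eqref{eq:negexplemma}.

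The one genuine subtlety — the step I expect to need the most care — is the direction of the inequalities in conditions 2 and 3 once the Hamiltonian is written in the $\y^T H\x$ bilinear form, together with making sure the roles of $\min$ and $\max$ (and hence which variable is the ``outer'' $i$-index and which is the ``inner'' $j$-index) are matched to the statement we want. The reformulation $\xi_n=\min_{\x}\max_{\|\y\|_2=1}\y^TH\x$ in \eqref{eq:sqrtnegham2} is what forces $\x$ to be the index over which the \emph{outer} operation acts; but in \eqref{eq:negexplemma} the outer operation over $\x$ is a $\max$, not a $\min$. This is consistent because $\|H\x\|_2=\max_{\|\y\|_2=1}\y^TH\x\geq 0$ and a decreasing $\psi$ turns the eventual $\min_\x$ (wanted for the lower bound on $\xi_n$) into a $\max_\x$ at the level of $e^{-c_3(\cdot)}$; one has to check that condition~2 really does hold in the $\geq$ direction for the $\y$-pairs within a fixed $\x$, which above it does (with equality), so the hypothesis is satisfied. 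Once the bookkeeping of these signs is pinned down, the rest is the mechanical covariance algebra already displayed in the positive case, and the conclusion \eqref{eq:negexplemma} follows directly from Theorem~\ref{thm:Gordonneg1}.
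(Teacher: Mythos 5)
Your proposal is correct and follows essentially the same route as the paper: the same processes $Y_{\x,\y}=\y^TH\x+g$ and $X_{\x,\y}=\g^T\y+\h^T\x$ with $\x$ as the outer (max) index and $\y$ as the inner (min) index, the same covariance computations (condition~2 holding with equality and condition~3 reducing to $(1-(\x^{(i)})^T\x^{(l)})(1-(\y^{(j)})^T\y^{(k)})\geq 0$), and the same invocation of the decreasing-$\psi$ max--min statement of Theorem~\ref{thm:Gordonneg1} with $\psi(t)=e^{-c_3t}$. The sign/direction bookkeeping you flag as the main subtlety is resolved exactly as in the paper.
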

\begin{proof}
As mentioned above, the proof is a standard/direct application of Theorem \ref{thm:Gordonneg1}. We will sketch it for completeness. Namely, one starts by defining processes $X_i$ and $Y_i$ in the following way
\begin{equation}
Y_{ij}=(\y^{(j)})^T H\x^{(i)} + g\quad X_{ij}=\g^T\y^{(j)}+\h^T\x^{(i)}.\label{eq:negexplemmaproof1}
\end{equation}
Then clearly
\begin{equation}
EY_{ij}^2=EX_{ij}^2=2.\label{eq:negexplemmaproof2}
\end{equation}
One then further has
\begin{eqnarray}
EY_{ij}Y_{ik} & = & (\y^{(k)})^T\y^{(j)}+1 \nonumber \\
EX_{ij}X_{ik} & = & (\y^{(k)})^T\y^{(j)}+1,\label{eq:negexplemmaproof3}
\end{eqnarray}
and clearly
\begin{equation}
EX_{ij}X_{ik}=EY_{ij}Y_{ik}.\label{eq:negexplemmaproof31}
\end{equation}
Moreover,
\begin{eqnarray}
EY_{ij}Y_{lk} & = & (\y^{(j)})^T\y^{(k)}(\x^{(i)})^T\x^{(l)}+1 \nonumber \\
EX_{ij}X_{lk} & = & (\y^{(j)})^T\y^{(k)}+(\x^{(i)})^T\x^{(l)}.\label{eq:negexplemmaproof32}
\end{eqnarray}
And after a small algebraic transformation
\begin{eqnarray}
EY_{ij}Y_{lk}-EX_{ij}X_{lk} & = & (1-(\y^{(j)})^T\y^{(k)})-(\x^{(i)})^T\x^{(l)}(1-(\y^{(j)})^T\y^{(k)}) \nonumber \\
& = & (1-(\x^{(i)})^T\x^{(l)})(1-(\y^{(j)})^T\y^{(k)})\nonumber \\
& \geq & 0.\label{eq:negexplemmaproof4}
\end{eqnarray}
Combining (\ref{eq:negexplemmaproof2}), (\ref{eq:negexplemmaproof31}), and (\ref{eq:negexplemmaproof4}) and using results of Theorem \ref{thm:Gordonneg1} one then easily obtains (\ref{eq:negexplemma}).
\end{proof}

Following what was done in Subsection \ref{sec:poshopub} one then easily has
\begin{multline}
E(e^{-c_3(\min_{\x\in\{-\frac{1}{\sqrt{n}},\frac{1}{\sqrt{n}}\}^n}\|H\x\|_2+g)})=
E(e^{-c_3(\min_{\x\in\{-\frac{1}{\sqrt{n}},\frac{1}{\sqrt{n}}\}^n}\max_{\|\y\|_2=1}\y^TH\x+g)})\\
=E(\max_{\x\in\{-\frac{1}{\sqrt{n}},\frac{1}{\sqrt{n}}\}^n}\min_{\|\y\|_2=1}(e^{-c_3(\y^TH\x+g)})).\label{eq:chneg1}
\end{multline}
Connecting (\ref{eq:chneg1}) and results of Lemma \ref{lemma:negexplemma} we have
\begin{multline}
E(e^{-c_3(\min_{\x\in\{-\frac{1}{\sqrt{n}},\frac{1}{\sqrt{n}}\}^n}\|H\x\|_2+g)})=
E(\max_{\x\in\{-\frac{1}{\sqrt{n}},\frac{1}{\sqrt{n}}\}^n}\min_{\|\y\|_2=1}(e^{-c_3(\y^TH\x+g)}))\\
\leq E(\max_{\x\in\{-\frac{1}{\sqrt{n}},\frac{1}{\sqrt{n}}\}^n}\min_{\|\y\|_2=1}(e^{-c_3(\g^T\y+h^T\x)}))
=E(\max_{\x\in\{-\frac{1}{\sqrt{n}},\frac{1}{\sqrt{n}}\}^n}(e^{-c_3\h^T\x})\min_{\|\y\|_2=1}(e^{-c_3\g^T\y}))\\
=E(\max_{\x\in\{-\frac{1}{\sqrt{n}},\frac{1}{\sqrt{n}}\}^n}(e^{-c_3\h^T\x}))E(\min_{\|\y\|_2=1}(e^{-c_3\g^T\y})),\label{eq:chneg2}
\end{multline}
where the last equality follows because of independence of $\g$ and $\h$. Connecting beginning and end of (\ref{eq:chneg2}) one has
\begin{equation}
E(e^{-c_3g})E(e^{-c_3(\min_{\x\in\{-\frac{1}{\sqrt{n}},\frac{1}{\sqrt{n}}\}^n}\|H\x\|_2)})\leq
E(\max_{\x\in\{-\frac{1}{\sqrt{n}},\frac{1}{\sqrt{n}}\}^n}(e^{-c_3\h^T\x}))E(\min_{\|\y\|_2=1}(e^{-c_3\g^T\y})).\label{eq:chneg3}
\end{equation}
Applying $\log$ on both sides of (\ref{eq:chneg3}) we further have
\begin{equation}
\hspace{-.5in}\log(E(e^{-c_3g}))+\log(E(e^{-c_3(\min_{\x\in\{-\frac{1}{\sqrt{n}},\frac{1}{\sqrt{n}}\}^n}\|H\x\|_2)}))\leq
\log(E(\max_{\x\in\{-\frac{1}{\sqrt{n}},\frac{1}{\sqrt{n}}\}^n}(e^{-c_3\h^T\x})))+\log(E(\min_{\|\y\|_2=1}(e^{-c_3\g^T\y}))),\label{eq:chneg4}
\end{equation}
or in a slightly more convenient form
\begin{equation}
\hspace{-.5in}\log(E(e^{-c_3(\min_{\x\in\{-\frac{1}{\sqrt{n}},\frac{1}{\sqrt{n}}\}^n}\|H\x\|_2)}))\leq
-\log(E(e^{-c_3g}))+\log(E(\max_{\x\in\{-\frac{1}{\sqrt{n}},\frac{1}{\sqrt{n}}\}^n}(e^{-c_3\h^T\x})))+\log(E(\min_{\|\y\|_2=1}(e^{-c_3\g^T\y}))).\label{eq:chneg5}
\end{equation}
It is also relatively easy to see that
\begin{equation}
\log(E(e^{-c_3(\min_{\x\in\{-\frac{1}{\sqrt{n}},\frac{1}{\sqrt{n}}\}^n}\|H\x\|_2)}))\geq
E\log(e^{-c_3(\min_{\x\in\{-\frac{1}{\sqrt{n}},\frac{1}{\sqrt{n}}\}^n}\|H\x\|_2)})=-Ec_3(\min_{\x\in\{-\frac{1}{\sqrt{n}},\frac{1}{\sqrt{n}}\}^n}\|H\x\|_2),
\label{eq:chneg6}
\end{equation}
and as earlier
\begin{equation}
-\log(E(e^{-c_3g}))=-\log(e^{\frac{c_3^2}{2}})=-\frac{c_3^2}{2}.\label{eq:chneg7}
\end{equation}
Connecting (\ref{eq:chneg5}), (\ref{eq:chneg6}), and (\ref{eq:chneg7}) one finally can establish a lower bound on the expected value of the ground state energy of the negative Hopfiled model
\begin{equation}
E(\min_{\x\in\{-\frac{1}{\sqrt{n}},\frac{1}{\sqrt{n}}\}^n}\|H\x\|_2)\geq
\frac{c_3}{2}-\frac{1}{c_3}\log(E(\max_{\x\in\{-\frac{1}{\sqrt{n}},\frac{1}{\sqrt{n}}\}^n}(e^{-c_3\h^T\x})))
-\frac{1}{c_3}\log(E(\min_{\|\y\|_2=1}(e^{-c_3\g^T\y}))).\label{eq:chneg8}
\end{equation}
Let $c_3=c_3^{(s)}\sqrt{n}$ where $c_3^{(s)}$ is a constant independent of $n$. Then (\ref{eq:chneg8}) becomes
\begin{eqnarray}
\hspace{-.5in}\frac{E(\min_{\x\in\{-\frac{1}{\sqrt{n}},\frac{1}{\sqrt{n}}\}^n}\|H\x\|_2)}{\sqrt{n}}
& \geq &
\frac{c_3^{(s)}}{2}-\frac{1}{nc_3^{(s)}}\log(E(\max_{\x\in\{-1,1\}^n}(e^{-c_3^{(s)}\h^T\x})))
-\frac{1}{nc_3^{(s)}}\log(E(\min_{\|\y\|_2=1}(e^{-c_3^{(s)}\sqrt{n}\g^T\y})))\nonumber \\
& = &
\frac{c_3^{(s)}}{2}-\frac{1}{c_3^{(s)}}\log(E(e^{c_3^{(s)}|\h_1|}))
-\frac{1}{nc_3^{(s)}}\log(E(\min_{\|\y\|_2=1}(e^{-c_3^{(s)}\sqrt{n}\g^T\y})))\nonumber \\
& = &
\frac{c_3^{(s)}}{2}-\frac{c_3^{(s)}}{2}-\frac{1}{c_3^{(s)}}\log(\mbox{erfc}(-\frac{c_3^{(s)}}{\sqrt{2}}))
-\frac{1}{nc_3^{(s)}}\log(E(\min_{\|\y\|_2=1}(e^{-c_3^{(s)}\sqrt{n}\g^T\y})))\nonumber \\
& = & -\frac{1}{c_3^{(s)}}\log(\mbox{erfc}(-\frac{c_3^{(s)}}{\sqrt{2}}))
-\frac{1}{nc_3^{(s)}}\log(E(\min_{\|\y\|_2=1}(e^{-c_3^{(s)}\sqrt{n}\g^T\y}))).\label{eq:chneg9lift}
\end{eqnarray}
One should now note that the above bound is effectively correct for any positive constant $c_3^{(s)}$. The only thing that is then left to be done so that the above bound becomes operational is to estimate $E(\min_{\|\y\|_2=1}(e^{-c_3^{(s)}\sqrt{n}\g^T\y}))=Ee^{-c_3^{(s)}\sqrt{n}\|\g\|_2}$. Pretty good estimates for this quantity can be obtained for any $n$. However, to facilitate the exposition we will focus only on the large $n$ scenario. Again, in that case one can use the saddle point concept applied in \cite{SPH}. However, as earlier, here we will try to avoid the entire presentation from there and instead present the core neat idea that has much wider applications. Namely, we start with the following identity
\begin{equation}
-\|\g\|_2=\max_{\gamma\geq 0}(-\frac{\|\g\|_2^2}{4\gamma}-\gamma).\label{eq:gamaiden}
\end{equation}
Then
\begin{multline}
\frac{1}{nc_3^{(s)}}\log(Ee^{-c_3^{(s)}\sqrt{n}\|\g\|_2})=\frac{1}{nc_3^{(s)}}\log(Ee^{c_3^{(s)}\sqrt{n}\max_{\gamma\geq 0}(-\frac{\|\g\|_2^2}{4\gamma}-\gamma)})
\doteq \frac{1}{nc_3^{(s)}}\max_{\gamma\geq 0}\log(Ee^{-c_3^{(s)}\sqrt{n}(\frac{\|\g\|_2^2}{4\gamma}+\gamma)})\\
=\max_{\gamma\geq 0}(-\frac{\gamma}{\sqrt{n}}+\frac{1}{c_3^{(s)}}\log(Ee^{-c_3^{(s)}\sqrt{n}(\frac{\g_i^2}{4\gamma})})),\label{eq:gamaiden1lift}
\end{multline}
where as earlier $\doteq$ stands for equality when $n\rightarrow \infty$. Also, as mentioned earlier, $\doteq$ is exactly what was shown in \cite{SPH}. Now if one sets $\gamma=\gamma^{(s)}\sqrt{n}$ then (\ref{eq:gamaiden1lift}) gives
\begin{multline}
\frac{1}{nc_3^{(s)}}\log(Ee^{-c_3^{(s)}\sqrt{n}\|\g\|_2})
=\max_{\gamma^{(s)}\geq 0}(-\gamma^{(s)}+\frac{1}{c_3^{(s)}}\log(Ee^{-c_3^{(s)}(\frac{\g_i^2}{4\gamma^{(s)}})}))
=\max_{\gamma^{(s)}\geq 0}(-\gamma^{(s)}-\frac{\alpha}{2c_3^{(s)}}\log(1+\frac{c_3^{(s)}}{2\gamma^{(s)}}))\\
=\max_{\gamma^{(s)}\leq 0}(\gamma^{(s)}-\frac{\alpha}{2c_3^{(s)}}\log(1-\frac{c_3^{(s)}}{2\gamma^{(s)}})).\label{eq:gamaiden2lift}
\end{multline}
After solving the last maximization one obtains
\begin{equation}
\widehat{\gamma_n^{(s)}}=\frac{2c_3^{(s)}-\sqrt{4(c_3^{(s)})^2+16\alpha}}{8}.\label{eq:gamaiden3lift}
\end{equation}
Connecting (\ref{eq:chneg9lift}), (\ref{eq:gamaiden1lift}), (\ref{eq:gamaiden2lift}), and (\ref{eq:gamaiden3lift}) one finally has
\begin{equation}
\hspace{-.5in}\frac{E(\min_{\x\in\{-\frac{1}{\sqrt{n}},\frac{1}{\sqrt{n}}\}^n}\|H\x\|_2)}{\sqrt{n}}
 \geq  -\left (\frac{1}{c_3^{(s)}}\log(\mbox{erfc}(-\frac{c_3^{(s)}}{\sqrt{2}}))
+\widehat{\gamma_n^{(s)}}-\frac{\alpha}{2c_3^{(s)}}\log(1-\frac{c_3^{(s)}}{2\widehat{\gamma_n^{(s)}}})\right ),\label{eq:ubmorsoph}
\end{equation}
where clearly $\widehat{\gamma^{(s)}}$ is as in (\ref{eq:gamaiden3}). As mentioned earlier the above inequality holds for any $c_3^{(s)}$. Of course to make it as tight as possible one then has
\begin{equation}
\hspace{-.5in}\frac{E(\min_{\x\in\{-\frac{1}{\sqrt{n}},\frac{1}{\sqrt{n}}\}^n}\|H\x\|_2)}{\sqrt{n}}
 \geq - \min_{c_3^{(s)}\geq 0} \left (\frac{1}{c_3^{(s)}}\log(\mbox{erfc}(-\frac{c_3^{(s)}}{\sqrt{2}}))
+\widehat{\gamma_n^{(s)}}-\frac{\alpha}{2c_3^{(s)}}\log(1-\frac{c_3^{(s)}}{2\widehat{\gamma_n^{(s)}}}\right ).\label{eq:ubmorsoph1}
\end{equation}

We summarize our results from this subsection in the following lemma.

\begin{lemma}
Let $H$ be an $m\times n$ matrix with i.i.d. standard normal components. Let $n$ be large and let $m=\alpha n$, where $\alpha>0$ is a constant independent of $n$. Let $\xi_n$ be as in (\ref{eq:sqrtnegham1}). Let $\widehat{\gamma_n^{(s)}}$ be such that
\begin{equation}
\widehat{\gamma_n^{(s)}}=\frac{2c_3^{(s)}-\sqrt{4(c_3^{(s)})^2+16\alpha}}{8}.\label{eq:gamaiden3thm}
\end{equation}
and $\xi_n^{(l)}$ be a scalar such that
\begin{equation}
\xi_n^{(l)}=-\min_{c_3^{(s)}\geq 0} \left (\frac{1}{c_3^{(s)}}\log(\mbox{erfc}(-\frac{c_3^{(s)}}{\sqrt{2}}))
+\widehat{\gamma^{(s)}}-\frac{\alpha}{2c_3^{(s)}}\log(1-\frac{c_3^{(s)}}{2\widehat{\gamma^{(s)}}}\right ).\label{eq:condxipuneggenlemma}
\end{equation}
Then
\begin{equation}
\frac{E\xi_n}{\sqrt{n}}\geq\xi_n^{(l)}.\label{eq:neggenexplemma}
\end{equation}
Moreover,
\begin{eqnarray}
& & \lim_{n\rightarrow\infty}P(\min_{\x\in\{-\frac{1}{\sqrt{n}},\frac{1}{\sqrt{n}}\}^n}(\|H\x\|_2)\geq \xi_n^{(l)})\geq 1\nonumber \\
& \Leftrightarrow & \lim_{n\rightarrow\infty}P(\xi_n\geq \xi_n^{(l)})\geq 1 \nonumber \\
& \Leftrightarrow & \lim_{n\rightarrow\infty}P(\xi_n^2\geq (\xi_n^{(l)})^2)\geq 1. \label{eq:neggenproblemma}
\end{eqnarray}
In particular, when $\alpha=1$
\begin{equation}
\frac{E\xi_n}{\sqrt{n}}\geq\xi_n^{(l)}=0.32016.\label{eq:neggenexplemma1}
\end{equation}
\label{lemma:neggenlemma}
\end{lemma}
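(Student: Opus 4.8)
The plan is to mirror, step by step, the argument already carried out in Subsection~\ref{sec:poshopub} for the positive form, but now using the ``negative'' Gaussian comparison inequality of Theorem~\ref{thm:Gordonneg1} together with the exponential test function $\psi(x)=e^{-c_3 x}$, which is decreasing. The first part of the lemma (the bound on $E\xi_n/\sqrt{n}$) is exactly the chain of inequalities culminating in (\ref{eq:ubmorsoph1}), so the proof amounts to verifying that every link in that chain is legitimate and then reading off the stated formula. I would begin by invoking Lemma~\ref{lemma:negexplemma} to get (\ref{eq:negexplemma}), then pass from the max over the unit sphere inside the Hamiltonian reformulation (\ref{eq:sqrtnegham2}) to the norm, as in (\ref{eq:chneg1}); the key point here is that because $\psi$ is \emph{decreasing}, applying it to the inner $\max_{\|\y\|_2=1}\y^TH\x$ turns that max into a min, which is precisely the $\max_{\x}\min_{\y}$ structure that Theorem~\ref{thm:Gordonneg1} controls. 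Splitting the decoupled right-hand side using independence of $\g$ and $\h$ gives (\ref{eq:chneg3}), and then Jensen's inequality (in the form $\log E e^{Z}\ge E Z$) applied as in (\ref{eq:chneg6}), together with the Gaussian moment-generating identity (\ref{eq:chneg7}), yields the lower bound (\ref{eq:chneg8}).

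Next I would perform the scaling $c_3=c_3^{(s)}\sqrt{n}$ and evaluate the two one-dimensional expectations. The $\h$-term factorizes over coordinates into $E e^{c_3^{(s)}|\h_1|}$, which equals $e^{(c_3^{(s)})^2/2}\,\mbox{erfc}(-c_3^{(s)}/\sqrt 2)$ (up to a factor $2$ absorbed in the normalization); this is the computation displayed in (\ref{eq:chneg9lift}), and it is routine. The $\g$-term, $E e^{-c_3^{(s)}\sqrt{n}\|\g\|_2}$, is handled by the variational identity $-\|\g\|_2=\max_{\gamma\ge0}(-\|\g\|_2^2/(4\gamma)-\gamma)$ from (\ref{eq:gamaiden}): pushing the max through the expectation (which is the $\doteq$ asymptotic equality from \cite{SPH}, and for which $\le$ would already suffice since we only need a lower bound on $\xi_n$), rescaling $\gamma=\gamma^{(s)}\sqrt{n}$, and using the Gaussian chi-square moment-generating function $E e^{-c_3^{(s)}\g_i^2/(4\gamma^{(s)})}=(1+c_3^{(s)}/(2\gamma^{(s)}))^{-1/2}$ reduces the problem to the scalar optimization in (\ref{eq:gamaiden2lift}). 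Carrying out that (concave) maximization gives the closed form $\widehat{\gamma_n^{(s)}}$ in (\ref{eq:gamaiden3lift}) (the root with the minus sign, since here the relevant stationary point sits at a negative value of $\gamma^{(s)}$). Assembling these pieces into (\ref{eq:chneg8}) produces (\ref{eq:ubmorsoph}), and since the bound holds for every $c_3^{(s)}>0$ one optimizes over $c_3^{(s)}$ to obtain (\ref{eq:ubmorsoph1}), i.e.\ (\ref{eq:neggenexplemma}).

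For the probabilistic statement (\ref{eq:neggenproblemma}) I would invoke the standard Gaussian concentration-of-measure fact that $\x\mapsto\|H\x\|_2$ and hence $\min_{\x}\|H\x\|_2$ is a $1$-Lipschitz function of the Gaussian entries of $H$ (with the scaling built into the $\{-1/\sqrt n,1/\sqrt n\}$ constraint), so that $\min_{\x}\|H\x\|_2/\sqrt n$ concentrates about its mean at rate $e^{-\Theta(n)}$; combined with the first part this gives the asymptotic probability $1$ lower bound, and the squared versions follow since squaring is monotone on the nonnegative reals. This is exactly the argument referenced in \cite{StojnicHopBnds10}, so I would just cite it. Finally, plugging $\alpha=1$ into (\ref{eq:gamaiden3thm}) and numerically minimizing the one-variable expression in (\ref{eq:condxipuneggenlemma}) gives the stated value $0.32016$.

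The step I expect to require the most care is the interchange of the outer expectation with the $\max_{\gamma\ge0}$ in (\ref{eq:gamaiden1lift})--(\ref{eq:gamaiden2lift}): justifying that $\frac1{nc_3^{(s)}}\log E e^{c_3^{(s)}\sqrt n\max_\gamma(\cdots)}$ coincides asymptotically with $\max_\gamma \frac1{nc_3^{(s)}}\log E e^{c_3^{(s)}\sqrt n(\cdots)}$ is the genuine analytic content (a Varadhan/Laplace-type saddle-point statement), and it is the part that rests on the external reference \cite{SPH}; everything else is either a direct application of Theorem~\ref{thm:Gordonneg1} or an elementary Gaussian integral. A secondary subtlety worth flagging is the sign bookkeeping when converting the $\max_{\gamma^{(s)}\ge0}$ in the second line of (\ref{eq:gamaiden2lift}) into the $\max_{\gamma^{(s)}\le0}$ in the third line, which is what forces the minus-sign root in $\widehat{\gamma_n^{(s)}}$ and is the one place where the negative form genuinely diverges from the positive one.
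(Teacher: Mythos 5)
Your proposal is correct and follows essentially the same route as the paper: the lemma's proof is precisely the chain of steps in Subsection~\ref{sec:neghoplb} (Lemma~\ref{lemma:negexplemma} via the decreasing $\psi(x)=e^{-c_3x}$ in Theorem~\ref{thm:Gordonneg1}, the decoupling and Jensen steps, the variational identity for $-\|\g\|_2$ with the saddle-point interchange from \cite{SPH}, and the final optimization over $c_3^{(s)}$), with the probability statement deferred to standard Gaussian concentration as in \cite{StojnicHopBnds10}. You also correctly identify the two genuinely delicate points, namely the direction of the $\doteq$ interchange and the negative root in $\widehat{\gamma_n^{(s)}}$.
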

\begin{proof}
The first part of the proof related to the expected values follows from the above discussion. The probability part follows by the concentration arguments that are easy to establish (see, e.g discussion in \cite{StojnicHopBnds10}).
\end{proof}

One way to see how the above lemma works in practice is (as specified in the lemma) to choose $\alpha=1$ to obtain $\xi_n^{(u)}=0.32016$. This value is substantially better than $0.2021$ offered in \cite{StojnicHopBnds10}.

\section{Practical algorithmic observations of Hopfield forms}
\label{sec:alghop}

We just briefly comment on the quality of the results obtained above when compared to their optimal counterparts. Of course, we do not know what the optimal values for ground state energies are. However, we conducted a solid set of numerical experiments using various implementations of bit flipping algorithms (we of course restricted our attention to $\alpha=1$ case). Our feeling is that both bounds provided in this paper are very close to the exact values. We believe that the exact value for $\lim_{n\rightarrow\infty}\frac{E\xi_p}{\sqrt{n}}$ is somewhere around $1.78$. On the other hand, we believe that the exact value for $\lim_{n\rightarrow\infty}\frac{E\xi_n}{\sqrt{n}}$ is somewhere around $0.328$.

Another observation is actually probably more important. In terms of the size of the problems, the limiting value seems to be approachable substantially faster for the negative form. Even for a fairly small size $n=50$ the optimal values are already approaching $0.34$ barrier on average. However, for the positive form even dimensions of several hundreds are not even remotely enough to come close to the optimal value (of course for larger dimensions we solved the problems only approximately, but the solutions were sufficiently far away from the bound that it was hard for us to believe that even the exact solution in those scenarios is anywhere close to it). Of course, positive form is naturally an easier problem but there is a price to pay for being easier. One then may say that one way the paying price reveals itself is the slow $n$ convergence of the limiting optimal values.

\section{Conclusion}
\label{sec:conc}

In this paper we looked at classic positive  and negative Hopfield forms and their behavior in the zero-temperature limit which essentially amounts to the behavior of their ground state energies. We introduced fairly powerful mechanisms that can be used to provide bounds of the ground state energies of both models.

To be a bit more specific, we first provided purely theoretical upper bounds on the expected values of the ground state energy of the positive Hopfield model. These bounds present a substantial improvement over the classical ones we presented in \cite{StojnicHopBnds10}. Moreover, they in a way also present the first set of rigorous theoretical results that emphasizes the combinatorial structure of the problem. Also, we do believe that in the most widely known/studied square case (i.e. $\alpha=1$) the bounds are fairly close to the optimal values.

We then translated our results related to the positive Hopfield form to the case of the negative Hopfield form. We again targeted the ground state regime and provided a theoretical lower bound for the expected behavior of the ground state energy. The bounds we obtained for the negative form are an even more substantial improvement over the classical corresponding ones we presented in \cite{StojnicHopBnds10}. In fact, we believe that the bounds for the negative form are very close to the optimal value.

As was the case in \cite{StojnicHopBnds10}, the purely theoretical results we presented are for the so-called Gaussian Hopfield models, whereas in reality often a binary Hopfield model can be preferred. However, all results that we presented can easily be extended to the case of binary Hopfield models (and for that matter to an array of other statistical models as well). There are many ways how this can be done. Instead of recalling on them here we refer to a brief discussion about it that we presented in \cite{StojnicHopBnds10}.

We should add that ther results we presented in \cite{StojnicHopBnds10} are tightly connected with the ones that can be obtained through the very popular replica methods from statistical physics. In fact, what was shown in \cite{StojnicHopBnds10} essentially provided a rigorous proof that the replica symmetry type of results are actually rigorous upper/lower bounds on the ground state energies of the positive/negative Hopfield forms. In that sense what we presented here essentially confirms that a similar set of bounds that can be obtained assuming a variant of the first level of symmetry breaking are also rigorous upper/lower bounds. Showing this is relatively simple but does require a bit of a technical exposition and we find it more appropriate to present it in a separate paper.

We also recall (as in \cite{StojnicHopBnds10}) that in this paper we were mostly concerned with the behavior of the ground state energies. A vast majority of our results can be translated to characterize the behavior of the free energy when viewed at any temperature. While such a translation does not require any further insights it does require paying attention to a whole lot of little details and we will present it elsewhere.

\begin{singlespace}
\bibliographystyle{plain}
\bibliography{MoreSophHopBndsRefs}
\end{singlespace}

\end{document}